\date{}
\renewcommand{\uppercasenonmath}[1]{}
\numberwithin{equation}{section} \theoremstyle{plain}
\newtheorem*{theorem*}{Theorem A}
\newtheorem*{theorem**}{Theorem B}
\newtheorem{theorem}{Theorem}[section]
\newtheorem{corollary}[theorem]{Corollary}
\newtheorem*{corollary*}{Corollary}
\newtheorem*{lemma*}{Lemma}
\newtheorem{proposition}[theorem]{Proposition}
\newtheorem*{proposition*}{Proposition}
\newtheorem{remark}[theorem]{Remark}
\newtheorem*{remark*}{Remark}
\newtheorem*{example*}{Example}
\newtheorem{definition}[theorem]{Definition}
\newtheorem*{definition*}{Definition}
\newtheorem*{conjecture*}{Conjecture}
\newtheorem*{ack*}{ACKNOWLEDGEMENTS}
\newcommand{\pf}{\noindent\begin {proof}}
\newcommand{\epf}{\end{proof}}
\begin{document}
\begin{center}
{\Large \bf Gorenstein projective and injective dimensions over Frobenius extensions
 \footnotetext{
Supported by the National Nature Science Foundation of China (11401476, 11561039) and a project of Chongqing Normal University (18XLB001).

E-mail address: wren@cqnu.edu.cn.}}

\vspace{0.5cm}    Wei Ren\\
{\small School of Mathematical Sciences, Chongqing Normal University, Chongqing 401331, China}
\end{center}


\bigskip
\centerline { \bf  Abstract}
\leftskip10truemm \rightskip10truemm
Let $R\subset A$ be a Frobenius extension of rings. We prove that: (1) for any left $A$-module $M$, $_{A}M$ is Gorenstein projective (injective) if and only if the underlying left $R$-module $_{R}M$ is Gorenstein projective (injective). (2) if $\mathrm{G}\text{-}\mathrm{proj.dim}_{A}M<\infty$, then $\mathrm{G}\text{-}\mathrm{proj.dim}_{A}M = \mathrm{G}\text{-}\mathrm{proj.dim}_{R}M$; the dual for Gorenstein injective dimension also holds.   (3) if the extension is split, then $\mathrm{G}\text{-}\mathrm{gldim}(A)= \mathrm{G}\text{-}\mathrm{gldim}(R)$.\\

\bigskip

{\noindent \it Key Words:} Frobenius extension, Gorenstein projective module, Gorenstein homological dimension.\\
{\it 2010 MSC:}  13B02, 16G50, 18G25.\\

\leftskip0truemm \rightskip0truemm \vbox to 0.2cm{}

\section { \bf Introduction}

The study of Gorenstein homological algebra stems from finitely generated modules of G-dimension zero over any noetherian rings, introduced by Auslander \cite{AB69} as a generalization of finite generated projective modules. In order to complete the analogy, Enochs and Jenda introduced the Gorenstein projective modules (not necessarily finitely generated) over any associative rings; and dually, Gorenstein injective modules are defined \cite{EJ95}. Then, Gorenstein homological dimensions are defined in the standard way, by using resolutions of Gorenstein modules. As shown in \cite[Theorem 4.2.6]{Chr00}, the Gorenstein projective dimension of a finitely generated module over a commutative noetherian ring agrees with its G-dimension. For finitely generated Gorenstein projective modules, there are several different terminologies in the literature, such as maximal Cohen-Macaulay modules, totally reflexive modules and modules of G-dimension zero.

In this paper, we intend to study the properties of Gorenstein projective (injective) modules and Gorenstein homological dimensions along Frobenius extension of rings. The theory of Frobenius extensions was developed by Kasch \cite{Kas54} as a generalization of Frobenius algebras, and was further studied by Nakayama-Tsuzuku \cite{NT60} and Morita \cite{Mor65} et. al. For example, the integral group ring extension $\mathbb{Z}\subset\mathbb{Z}G$ for a finite group $G$, and the ring extension of dual numbers of an algebra $R\subset R[x]/(x^2)$, are Frobenius extensions. There are other examples include Hopf subalgebras, finite extensions of enveloping algebras of Lie super-algebras, enveloping algebras of Lie coalgebras etc. We refer to a lecture due to Kadison \cite{Kad99} for more details.

We are motivated by a question raised by Chen \cite{Chen13}. He introduced a generalization of Frobenius extension, called the totally reflexive extension of rings, and proved that totally reflexive modules transfer along such extension. He asked if this is true for not necessarily finitely generated Gorenstein projective modules, and claimed that a new method for argument is needed. Our first main result gives an affirmative answer to this problem for Frobenius extensions. In Theorem \ref{thm 2.1}, we show that: for any Frobenius extension $R\subset A$ and any left $A$-module $M$, $_{A}M$ is Gorenstein projective in $\mathrm{Mod}(A)$ if and only if the underlying left $R$-module $_{R}M$ is Gorenstein projective; and a similar result for Gorenstein injective modules also holds.

In the study of Gorenstein homological algebra, an interesting assertion that ``every result in classical homological algebra has a counterpart in Gorenstein homological algebra'' (see Holm's thesis \cite{Holm}), can be confirmed by many works, see for example \cite{Hol04, EJ00, Chr00}. As a contribution to this, we give a Gorenstein counterpart of the result due to Nakayama-Tsuzuku (\cite[Theorem 8, Theorem 8$^{'}$]{NT60}): Let $R\subset A$ be a Frobenius extension, $M$ be any left $A$-module. If $\mathrm{G}\text{-}\mathrm{proj.dim}_{A}M<\infty$, then $\mathrm{G}\text{-}\mathrm{proj.dim}_{A}M = \mathrm{G}\text{-}\mathrm{proj.dim}_{R}M$; the dual for Gorenstein injective dimension also holds; see Proposition \ref{prop 3.1}.

For any ring $\Lambda$, Bennis and Mahdou proved an equality (\cite[Theorem 1.1]{BM10}):
\begin{center}$\mathrm{sup}\{\mathrm{G}\text{-}\mathrm{proj.dim}_{\Lambda}M \mid M\in \mathrm{Mod}(\Lambda) \} = \mathrm{sup}\{\mathrm{G}\text{-}\mathrm{inj.dim}_{\Lambda}M \mid M\in \mathrm{Mod}(\Lambda)\}.$\end{center}
As a Gorenstein counterpart of the global dimension, they named the common value of this equality the Gorenstein global dimension of $\Lambda$, and denoted it by $\textrm{G-gldim}(\Lambda)$. Following \cite{BR07}, a ring $\Lambda$ is left-Gorenstein provided that the category $\mathrm{Mod}(\Lambda)$ of left $\Lambda$-modules is a Gorenstein category. This is equivalent to the condition that the Gorenstein global dimension of $\Lambda$ is finite. According to a classical result established by Auslander, Buchsbaum and Serre, a commutative noetherian local ring is regular if and only if the projective dimension of its residue field is finite; moreover, in this case the ring has finite global dimension. So left-Gorenstein rings may be called (left) Gorenstein regular rings, meaning a Gorenstein counterpart of regular rings.

Let $R\subset A$ be any Frobenius extension. If the extension is split (i.e. $R$ is a direct summand of $A$ as an $R$-bimodule), then we prove in Theorem \ref{thm 3.1} that $A$ is Gorenstein regular if and only if $R$ is Gorenstein regular; and moreover, we show that $\mathrm{G}\text{-}\mathrm{gldim}(A)= \mathrm{G}\text{-}\mathrm{gldim}(R)$, that is, the Gorenstein global dimensions are invariant along Frobenius extensions; see Theorem \ref{thm 3.2}. Consequently, it follows immediately from  \cite[Theorem 4.1]{Emm12} that for a Gorenstein regular Frobenius extension $R\subset A$ (i.e. either $R$ or $A$ is Gorenstein regular), there are equalities: $\mathrm{spli}(A)= \mathrm{silp}(A)=\mathrm{fin.dim}(A) = \mathrm{spli}(R)  = \mathrm{silp}(R) = \mathrm{fin.dim}(R)$; see Corollary \ref{cor 3.1}. Here, the supremum of the projective lengths of injective left $R$-modules $\mathrm{spli}(R)$, and the supremum of the injective lengths of projective left $R$-modules $\mathrm{silp}(R)$, are two invariants introduced by Gedrich and Gruenberg \cite{GG87} in connection with the existence of complete cohomological functors in the category of left $R$-modules. The left finitistic dimension $\mathrm{fin.dim}(R)$ of $R$ is defined as the supremum of the projective dimensions of those left $R$-modules that have finite projective dimension.\\

The paper is organized as follows. In Section 2, we prove the first main result on transfer of Gorenstein projective and injective modules along Frobenius extensions, see Theorem \ref{thm 2.1} and \ref{thm 2.2}. The result in Theorem \ref{thm 2.1} gives an affirmative answer to Chen's question. In Section 3, we study Gorenstein homological dimensions along Frobenius extensions. A Gorenstein counterpart of \cite[Theorem 8 and 8$^{'}$]{NT60} is given in Proposition \ref{prop 3.1}, which shows that under the finiteness condition, the Gorenstein projective and injective dimensions of modules are invariant under Frobenius extensions. Then, we show that the Gorenstein regular property of rings (i.e.  finiteness of Gorenstein global dimension), and furthermore, the Gorenstein global dimension, are invariant along split Frobenius extensions; see Theorem \ref{thm 3.1} and \ref{thm 3.2}. Consequently, some equalities follows; see Corollary \ref{cor 3.1}.

\section{\bf Gorenstein projective and injective modules over Frobenius extensions}

Let $R$ be a ring. Recall that an $R$-module $M$ is said to be Gorenstein projective if $M$ is a syzygy of a totally acyclic complex of projective modules, i.e. if there exists an acyclic complex of projective $R$-modules $\mathbf{P}:=\cdots \rightarrow P_{1}\rightarrow P_{0}\rightarrow P_{-1}\rightarrow \cdots$ which remains acyclic when applying the functor $\mathrm{Hom}_{R}(-,P)$ for any projective $R$-module $P$, such that $M = \mathrm{Ker}(P_{0}\rightarrow P_{-1})$. Dually, Gorenstein injective modules are defined \cite{EJ00}. The study of Gorenstein homological algebra has found interesting applications in some areas such as representation theory, Tate cohomology and the theory of singularity categories, see for example \cite{AM02, Chen11, Hap91, Zhang11}. Moreover, it may prove to be useful in studying certain group-theoretical problems, such as characterizing algebraically the groups that admit a finite dimensional model for the classifying space for proper actions (\cite{BDT09}).

Throughout, all rings are associative with a unit. Homomorphisms of rings are required to send the unit to the unit. A left $R$-module $M$ is sometimes written as $_{R}M$. For two left $R$-modules $M$ and $N$, denote by $\mathrm{Hom}_{R}(M, N)$ the abelian
group consisting of left $R$-homomorphisms between them. A right $R$-module $M$ is sometimes written as $M_{R}$. We identify right $R$-modules
with left $R^{op}$-modules, where $R^{op}$ is the opposite ring of $R$. For two right $R$-modules $M$ and $N$, the abelian group of right $R$-homomorphisms is denoted by $\mathrm{Hom}_{R^{op}}(M, N)$. We denote by $\mathrm{Mod}(R)$ the category of left $R$-modules, and
$\mathrm{Mod}(R^{op})$ the category of right $R$-modules. Let $S$ be another ring. An $R$-$S$-bimodule $M$ is written as $_{R}M_{S}$.

We always denote a ring extension $\iota: R\hookrightarrow A$ by $R\subset A$. The natural bimodule $_{R}A_{R}$ is given by
$rar^{'}:= \iota(r)\cdot a\cdot \iota(r^{'})$. Similarly, we consider $_{R}A$ and $_{R}A_{A}$ etc.

The theory of Frobenius extensions was developed by Kasch \cite{Kas54} as a generalization of Frobenius algebras. Since then, Nakayama-Tsuzuku \cite{NT60} and Morita \cite{Mor65} et. al. defined natural generalizations of Frobenius extensions of different kinds. The definition of Frobenius extension we chose is a condition in \cite{Mor65}. We refer to \cite[Definition 1.1, Theorem 1.2]{Kad99} for the following definition of Frobenius extensions.

A functor between abelian categories is generally called ``Frobenius'' if it has left and right adjoints which are naturally equivalent.
For a ring extension $R\subset A$, there is a restricted functor $Res: \mathrm{Mod}(A)\rightarrow \mathrm{Mod}(R)$ sends $_{A}M$ to $_{R}M$. In the opposite direction, there are functors $T = A\otimes_{R}-: \mathrm{Mod}(R)\rightarrow \mathrm{Mod}(A)$ and $H= \mathrm{Hom}_{R}(A, -): \mathrm{Mod}(R)\rightarrow \mathrm{Mod}(A)$. It is clear that $(T, Res)$ and $(Res, H)$ are adjoint pairs.

\begin{definition}\label{def 1} A ring extension $R\subset A$ is a Frobenius extension, provided that one of the following equivalent conditions holds:\\
\indent $(1)$ The functors $T = A\otimes_{R}-$ and $H= \mathrm{Hom}_{R}(A, -)$ are naturally equivalent.\\
\indent $(2)$ $_{R}A$ is finite generated projective and $_{A}A_{R}\cong (_{R}A_{A})^{*}= \mathrm{Hom}_{R}(_{R}A_{A}, R)$.\\
\indent $(3)$ $A_{R}$ is finite generated projective and $_{R}A_{A}\cong (_{A}A_{R})^{*}= \mathrm{Hom}_{R^{op}}(_{A}A_{R}, R)$.\\
\indent $(4)$ There exists an $R$-$R$-homomorphism $\tau: A\rightarrow R$ and elements $x_i$, $y_i$ in $A$, such that for any $a\in A$, one has
$\sum\limits_{i}x_i\tau(y_ia) = a$ and $\sum\limits_{i}\tau(ax_i)y_i = a$.
\end{definition}

By \cite[Proposition 1]{NT60}, if we choose the automorphism of $R$ to be the identity of $R$, then the above definition coincides with the 2. Frobenius extension (or a Frobenius extension of 2nd kind) introduced by Nakayama-Tsuzuku.

There is an observation due to Buchweitz: for a finite group $G$, a $\mathbb{Z}G$-module, or equivalently an integral representation of $G$, is maximal Cohen-Macaulay over $\mathbb{Z}G$  if and only if the underlying $\mathbb{Z}$-module is maximal Cohen-Macaulay,  or equivalently, the underlying $\mathbb{Z}$-module is free, see \cite[Section 8.2]{Buc87}. Note that the classical example $\mathbb{Z}\subset \mathbb{Z}G$ is a Frobenius extension. In \cite{Chen13}, Chen introduced a generalization of Frobenius extension, called the totally reflexive extension of rings, and proved that totally reflexive modules transfer along such extension. He asked if this is true for not necessarily finitely generated Gorenstein projective modules, and claimed that a new method is needed for the question. We have the following, which gives an affirmative answer of Chen's problem in the case of Frobenius extensions. Moreover, it generalizes \cite[Theorem 2.5 and 2.11]{Ren}.

\begin{theorem}\label{thm 2.1} Let $R\subset A$ be a Frobenius extension of rings, $M$ be any left $A$-module. The following are equivalent:\\
\indent $(1)$ $_{A}M$ is Gorenstein projective in $\mathrm{Mod}(A)$.\\
\indent $(2)$ The underlying left $R$-module $_{R}M$ is Gorenstein projective.\\
\indent $(3)$ $A\otimes_{R}M$ and $\mathrm{Hom}_{R}(A, M)$ are Gorenstein projective left $A$-modules.
\end{theorem}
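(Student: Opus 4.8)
The plan is to reduce everything to a transfer principle for total acyclicity and then to build a complete resolution over $A$ by hand. First I would record the formal properties of the three functors $\mathrm{Res}$, $T=A\otimes_R-$ and $H=\mathrm{Hom}_R(A,-)$: all three are exact (since $_RA$ and $A_R$ are finitely generated projective), $\mathrm{Res}$ preserves projectives (free $A$-modules restrict to projective $R$-modules because $_RA$ is projective), and $T,H$ preserve projectives as adjoints of the exact functor $\mathrm{Res}$; moreover $T\cong H$ by Definition \ref{def 1}. The computational engine is the pair of adjunction isomorphisms $\mathrm{Hom}_A(TX,Y)\cong\mathrm{Hom}_R(X,\mathrm{Res}\,Y)$ and $\mathrm{Hom}_A(Y,HX)\cong\mathrm{Hom}_R(\mathrm{Res}\,Y,X)$; together with $H(R)\cong T(R)\cong A$ they give $\mathrm{Hom}_A(Y,A)\cong\mathrm{Hom}_R(\mathrm{Res}\,Y,R)$, and, through $A^{(I)}\cong H(R^{(I)})\cong T(R^{(I)})$, the more flexible $\mathrm{Hom}_A(Y,A^{(I)})\cong\mathrm{Hom}_R(\mathrm{Res}\,Y,R^{(I)})$. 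Using these I would prove the pivotal lemma: an acyclic complex $\mathbf P$ of projective $A$-modules is totally acyclic over $A$ if and only if $\mathrm{Res}\,\mathbf P$ is totally acyclic over $R$. One direction rewrites $\mathrm{Hom}_R(\mathrm{Res}\,\mathbf P,Q)\cong\mathrm{Hom}_A(\mathbf P,HQ)$ with $HQ$ projective over $A$; the other uses $\mathrm{Hom}_A(\mathbf P,A^{(I)})\cong\mathrm{Hom}_R(\mathrm{Res}\,\mathbf P,R^{(I)})$, the reduction of arbitrary projectives to $A^{(I)}\cong H(R^{(I)})$ being exactly what lets the test run against all projectives.

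With this lemma the easy implications are quick. For $(1)\Rightarrow(2)$ I would simply restrict a complete resolution of $_AM$. For $(2)\Rightarrow(3)$ I would apply the exact, projective-preserving functor $T$ to a complete resolution of $_RM$ and verify total acyclicity over $A$ through $\mathrm{Hom}_A(T\mathbf Q,P)\cong\mathrm{Hom}_R(\mathbf Q,\mathrm{Res}\,P)$; since $H\cong T$, the module $\mathrm{Hom}_R(A,M)$ is covered at the same stroke. For $(3)\Rightarrow(2)$ the triangle identity $\mathrm{Res}(\varepsilon_M)\circ\eta_{\mathrm{Res}\,M}=\mathrm{id}$ for the adjunction $(T,\mathrm{Res})$ exhibits $_RM$ as an $R$-direct summand of $\mathrm{Res}\,T(M)=\mathrm{Res}(A\otimes_R M)$, which is Gorenstein projective over $R$ by $(1)\Rightarrow(2)$ applied to the $A$-Gorenstein projective module $A\otimes_R M$; closure of Gorenstein projectives under summands finishes it.

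The crux is $(2)\Rightarrow(1)$, and this is where Chen's remark that ``a new method is needed'' bites. By the lemma it suffices to produce an acyclic complex of projective $A$-modules with $M$ as a $0$-syzygy all of whose syzygies restrict to Gorenstein projective $R$-modules (equivalently, whose restriction is totally acyclic). The left, projective-resolution half costs nothing: restricting any $A$-projective resolution of $M$ yields an $R$-projective resolution of the Gorenstein projective module $_RM$, so its syzygies are automatically Gorenstein projective over $R$. The main obstacle is the right half, namely coresolving $M$ by projective $A$-modules while keeping every cosyzygy restricting to an $R$-Gorenstein projective module. This is genuinely delicate because, for a non-split extension, $M$ is neither an $A$-summand of $A\otimes_R M$ nor embeddable into a projective by any naive $\tau$-split recipe (the normalization $\tau(1)=1$ fails), so the cokernel of such an embedding cannot be controlled by the projectively-resolving property alone.

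My proposed device is to coresolve through the coinduced module. Given any $A$-module $X$ with $_RX$ Gorenstein projective, the adjunction unit $\eta'_X\colon X\hookrightarrow H\mathrm{Res}(X)$ is monic, and $H\mathrm{Res}(X)$ is Gorenstein projective over $A$ by $(2)\Rightarrow(3)$. The triangle identity $\varepsilon'_{\mathrm{Res}\,X}\circ\mathrm{Res}(\eta'_X)=\mathrm{id}$ shows $\mathrm{Res}(\eta'_X)$ is split, so $\mathrm{Res}(\mathrm{coker}\,\eta'_X)$ is an $R$-summand of $\mathrm{Res}\,H\mathrm{Res}(X)\cong\mathrm{Res}\,T\mathrm{Res}(X)$, hence Gorenstein projective over $R$. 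Next, since $H\mathrm{Res}(X)$ is Gorenstein projective over $A$, its own complete resolution gives $0\to H\mathrm{Res}(X)\to P\to W\to 0$ with $P$ projective over $A$ and $W$ Gorenstein projective over $A$; composing yields $0\to X\to P\to D\to 0$ in which $D$ is an extension of $W$ by $\mathrm{coker}\,\eta'_X$, so $_RD$ is Gorenstein projective over $R$ by closure under extensions. Iterating this step with $D$ in place of $X$, starting from $X=M$, produces a projective $A$-coresolution of $M$ whose every cosyzygy restricts to an $R$-Gorenstein projective module. Splicing with the left half and invoking the transfer lemma delivers a totally acyclic complex of projective $A$-modules with $M$ as a syzygy, proving $_AM$ Gorenstein projective and closing the cycle of equivalences.
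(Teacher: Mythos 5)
Your proposal is correct, and its load-bearing construction for the hard implication is the same as the paper's: you coresolve $M$ through the coinduced module, composing the monic unit $M\to \mathrm{Hom}_R(A,M)$ with an embedding of the Gorenstein projective $A$-module $\mathrm{Hom}_R(A,M)$ into a projective, and iterate --- this is precisely the map $f\varphi$ in the paper's proof of (3)$\Rightarrow$(1), and your treatment of the easy implications ((1)$\Rightarrow$(2) by restriction and adjunction, (2)$\Rightarrow$(3) by applying $T$, (3)$\Rightarrow$(2) by the summand trick) matches the paper essentially verbatim.

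Where you genuinely diverge is in the verification machinery, in two places, and both of your variants check out. First, your two-way transfer lemma (an acyclic complex of projective $A$-modules is totally acyclic over $A$ if and only if its restriction is totally acyclic over $R$, the ascent direction proved by testing against free modules via $A^{(I)}\cong T(R^{(I)})\cong H(R^{(I)})$ and passing to summands) lets you defer all $\mathrm{Hom}_A(-,P)$-exactness to a single appeal at the end; the paper has only the descent direction, implicitly inside (1)$\Rightarrow$(2), and instead verifies $\mathrm{Hom}_A(-,P)$-exactness at every stage by hand, via the split epimorphism $\psi\colon A\otimes_R P\to P$ and a lifting diagram, together with a separate computation that $\mathrm{Ext}^i_A(M,P)=0$ --- steps your lemma absorbs, since it suffices for you that all syzygies of the spliced complex restrict to Gorenstein projective $R$-modules. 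Second, you control the cosyzygy $D$ by the extension $0\to \mathrm{coker}\,\eta'_X\to D\to W\to 0$, with $\mathrm{Res}(\mathrm{coker}\,\eta'_X)$ a summand of a Gorenstein projective $R$-module via the triangle identity and $W$ Gorenstein projective over $A$, invoking closure of the Gorenstein projective class under extensions and summands; the paper instead shows the sequence $0\to M\to P_0\to L_0\to 0$ is $\mathrm{Hom}_R(-,Q)$-exact using the $R$-retraction $\varphi'$ of the unit and concludes that $L_0$ is Gorenstein projective over $R$ from \cite[Corollary 2.11]{Hol04}. The trade-off: your route leans on the standard closure properties of the Gorenstein projective class (Holm's Theorem 2.5) and the free-module reduction (where finite generation of $_RA$, or simply $T\cong H$, makes $H$ commute with coproducts), and it yields a reusable lemma stating cleanly that total acyclicity transfers both ways along a Frobenius extension; the paper's route is more hands-on but needs only Ext-vanishing and the two explicit splittings. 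That you prove (2)$\Rightarrow$(1) where the paper proves (3)$\Rightarrow$(1) is immaterial once (2)$\Leftrightarrow$(3) is in place. One cosmetic slip: right adjoints of exact functors preserve injectives, not projectives, so $H$ preserves projectives only because $H\cong T$ --- which you do have available, so nothing breaks.
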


\begin{proof}
(1)$\Longrightarrow$(2). It follows from \cite[Lemma 2.2]{Ren}. Indeed, for the Gorenstein projective left $A$-module $M$,  there exists a totally acyclic complex of projective $A$-modules $\mathbf{P}:=\cdots \rightarrow P_{1}\rightarrow P_{0}\rightarrow P_{-1}\rightarrow \cdots$ such that
$M = \mathrm{Ker}(P_{0}\rightarrow P_{-1})$. By restricting $\mathbf{P}$ one gets an acyclic complex of projective $R$-modules. For any projective left $R$-module $Q$,  $\mathrm{Hom}_{R}(A, Q)\cong A\otimes_{R}Q$ is a projective $A$-module, and it follows from isomorphisms $\mathrm{Hom}_{R}(\mathbf{P}, Q)\cong\mathrm{Hom}_{R}(A\otimes_{A}\mathbf{P}, Q)\cong \mathrm{Hom}_{A}(\mathbf{P}, \mathrm{Hom}_{R}(A, Q))$ that $\mathrm{Hom}_{R}(\mathbf{P}, Q)$ is acyclic. The assertion follows.

(2)$\Longrightarrow$(3). Let $\mathbf{P}:=\cdots \rightarrow P_{1}\rightarrow P_{0}\rightarrow P_{-1}\rightarrow \cdots$ be a totally acyclic complex of projective $R$-modules such that $_{R}M = \mathrm{Ker}(P_{0}\rightarrow P_{-1})$. It is direct to check that $A\otimes_{R}\mathbf{P}$ is a totally acyclic complex of projective $A$-modules, and $A\otimes_{R}M = \mathrm{Ker}(A\otimes_{R}P_{0}\rightarrow A\otimes_{R}P_{-1})$. Hence, $A\otimes_{R}M$ and $\mathrm{Hom}_{R}(A, M)$ are Gorenstein projective left $A$-modules.

(3)$\Longrightarrow$(2). Note that for the ring extension $R\subset A$ and any $A$-module $M$, the module $_{R}M$ is a direct summand of the $R$-module $A\otimes_{R}M$. If $A\otimes_{R}M$ is a Gorenstein projective left $A$-module, then since (1) implies (2), we get that $A\otimes_{R}M$ is Gorenstein projective in $\mathrm{Mod}(R)$, and hence $_{R}M$ is Gorenstein projective.

(3)$\Longrightarrow$(1). Let $P$ be any projective $A$-module. Since (3) implies (2), we have, from $A\otimes_{R}M$ being a Gorenstein projective left $A$-module, that the module $_{R}M$ is Gorenstein projective. Then it follows from \cite[Lemma 2.3]{Ren} that $\mathrm{Ext}_{A}^{i}(M,P)=0$. Indeed, note that the module $_{R}P$ is projective, and then we have $0= \mathrm{Ext}_{R}^{i}(M,P)\cong \mathrm{Ext}_{R}^{i}(A\otimes_{A}M, P)\cong \mathrm{Ext}_{A}^{i}(M,\mathrm{Hom}_{R}(A,P))\cong\mathrm{Ext}_{A}^{i}(M, A\otimes_{A}P)$. Moreover, since $_{A}P$ is a direct summand of $A\otimes_{R}P$, and then $\mathrm{Ext}_{A}^{i}(M,P)=0$. It only remains to construct the right part of the totally acyclic complex of $_{A}M$.

Since $\mathrm{Hom}_{R}(A,M)$ is a Gorenstein projective $A$-module, there is an exact sequence
$0\rightarrow \mathrm{Hom}_{R}(A,M)\stackrel{f}\rightarrow P_{0}\rightarrow L\rightarrow 0$ of $A$-modules, where $P_{0}$ is projective and $L$ is Gorenstein projective. There is a map $\varphi: M\rightarrow\mathrm{Hom}_{R}(A,M)$ given by $\varphi(m)(a)=am$, which is an $A$-monomorphism, and is split when we restrict it as an $R$-homomorphism. So we have an $R$-homomorphism $\varphi^{'}:  \mathrm{Hom}_{R}(A,M)\rightarrow M$ such that $\varphi^{'}\varphi=\mathrm{id}_{M}$. Let $Q$ be any projective $R$-module, and $g: M\rightarrow Q$ be any $R$-homomorphism. Since $L$ is also Gorenstein projective as an $R$-module, for the $R$-homomorphism $g\varphi^{'}: \mathrm{Hom}_{R}(A,M)\rightarrow Q$, there is an $R$-homomorphism $h: P_{0}\rightarrow Q$, such that $g\varphi^{'}= hf$. That is, we have the following commutative diagram:
$$\begin{xymatrix}{
 &Q\\
0 \ar[r]^{}  &\mathrm{Hom}_{R}(A,M) \ar[u]^{g\varphi^{'}} \ar[r]^{\quad\quad f} &P_{0}\ar[lu]_{\exists h}\ar[r] &L\ar[r] &0
}\end{xymatrix}$$

Now we have an $A$-monomorphism $f\varphi: M\rightarrow P_{0}$. Consider the exact sequence $0\rightarrow M\stackrel{f\varphi}\rightarrow P_{0}\rightarrow L_{0}\rightarrow 0$ of $A$-modules, where $P_{0}$ is projective, and $L_{0} =\mathrm{Coker}(f\varphi)$. Restricting the sequence, we note that it is $\mathrm{Hom}_{R}(-,Q)$-exact for any projective $R$-module $Q$, since for any $R$-homomorphism $g: M\rightarrow Q$, there exists an $R$-homomorphism $h: P_{0}\rightarrow Q$ such that $g=(g\varphi^{'})\varphi=h(f\varphi)$. Then, it follows from the exact sequence $\mathrm{Hom}_{R}(P_{0},Q)\rightarrow\mathrm{Hom}_{R}(M,Q)\rightarrow\mathrm{Ext}_{R}^{1}(L_{0},Q)\rightarrow 0$ that $\mathrm{Ext}_{R}^{1}(L_{0},Q)=0$. Moreover, $_{R}M$ is Gorenstein projective by (3)$\Rightarrow$(2) and $_{R}P_{0}$ is projective, it follows from \cite[Corollary 2.11]{Hol04} that $L_{0}$ is a Gorenstein projective $R$-module.

Let $P$ be any projective $A$-module. There is a split epimorphism $\psi: A\otimes_{R}P\rightarrow P$ of $A$-modules given by $\psi(a\otimes_{R}x)=ax$ for any $a\in A$ and $x\in P$, and then there exists an $A$-homomorphism $\psi^{'}: P\rightarrow A\otimes_{R}P$ such that $\psi\psi^{'}=\mathrm{id}_{P}$. Note that $P$ is also projective as an $R$-module. Then, it follows from $\mathrm{Ext}_{A}^{1}(L_{0}, A\otimes_{R}P)\cong \mathrm{Ext}_{R}^{1}(L_{0}, P) = 0$ that the exact sequence $0\rightarrow M\stackrel{f\varphi}\rightarrow P_{0}\rightarrow L_{0}\rightarrow 0$ remains exact after applying $\mathrm{Hom}_{A}(-, A\otimes_{R}P)$.

For any $A$-homomorphism $\alpha: M\rightarrow P$, we consider the following diagram
$$\begin{xymatrix}{
 &P\ar[r]^{\psi^{'}\quad} &A\otimes_{R}P\\
 0 \ar[r]^{}  &M\ar[u]^{\alpha}\ar[r]^{f\varphi} &P_{0}\ar[r]\ar[u]_{\exists \beta}\ar@{-->}[lu]_{} &L_{0}\ar[r] &0
}\end{xymatrix}$$
For $\psi^{'}\alpha: M\rightarrow A\otimes_{R}P$, there exists an $A$-map $\beta: P_{0}\rightarrow A\otimes_{R}P$ such that $\psi^{'}\alpha= \beta(f\varphi)$. And then, we have $\psi\beta: P_{0}\rightarrow P$, such that $\alpha = (\psi\psi^{'})\alpha=(\psi\beta)(f\varphi)$. This implies that the sequence  $0\rightarrow M\stackrel{f\varphi}\rightarrow P_{0}\rightarrow L_{0}\rightarrow 0$ is $\mathrm{Hom}_{A}(-, P)$-exact.

Note that $L_{0}$ is a Gorenstein projective $R$-module, and then $\mathrm{Hom}_{R}(A, L_{0})$ is a Gorenstein projective $A$-module. Repeating the process we followed with $M$, we inductively construct an exact sequence $0\rightarrow M\rightarrow P_{0}\rightarrow P_{1}\rightarrow P_{2}\rightarrow \cdots$ in $\mathrm{Mod}(A)$, with each $P_{i}$ projective and which is also exact after applying $\mathrm{Hom}_{A}(-,P)$ for any projective $A$-module $P$. This completes the proof.
\end{proof}

Dually, we have the following.

\begin{theorem}\label{thm 2.2} Let $R\subset A$ be a Frobenius extension of rings, $M$ be any left $A$-module. The following are equivalent:\\
\indent $(1)$ $_{A}M$ is Gorenstein injective in $\mathrm{Mod}(A)$.\\
\indent $(2)$ The underlying left $R$-module $_{R}M$ is Gorenstein injective.\\
\indent $(3)$ $A\otimes_{R}M$ and $\mathrm{Hom}_{R}(A, M)$ are Gorenstein injective left $A$-modules.
\end{theorem}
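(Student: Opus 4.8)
The plan is to dualize the proof of Theorem \ref{thm 2.1} throughout, replacing projectives by injectives, using $\mathrm{Hom}_{R}(A,-)$ in place of $A\otimes_{R}-$ wherever total acyclicity is being detected, and replacing the syzygy constructions by cosyzygy constructions. The one structural fact that is genuinely new — its projective analogue being the triviality that restriction preserves projectivity — is that restriction sends injective $A$-modules to injective $R$-modules, so I would record this first. Since $(T,Res)$ is an adjoint pair and $T=A\otimes_{R}-$ is exact (the bimodule being finitely generated projective, hence flat, on each side), the right adjoint $Res$ preserves injectivity; dually $H=\mathrm{Hom}_{R}(A,-)$ is exact (as $_{R}A$ is projective) and, being right adjoint to the exact functor $Res$, sends injective $R$-modules to injective $A$-modules. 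Together with the natural equivalence $T\cong H$ of the Frobenius condition, this shows $A\otimes_{R}I\cong\mathrm{Hom}_{R}(A,I)$ is an injective $A$-module for every injective $R$-module $I$.

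For (1)$\Rightarrow$(2) I would restrict a totally acyclic complex $\mathbf{I}$ of injective $A$-modules with $_{A}M$ a cosyzygy; by the remark above $Res(\mathbf{I})$ consists of injective $R$-modules, and for any injective $R$-module $I$ the adjunction $(T,Res)$ gives $\mathrm{Hom}_{R}(I,Res(\mathbf{I}))\cong\mathrm{Hom}_{A}(A\otimes_{R}I,\mathbf{I})$, which is acyclic since $A\otimes_{R}I$ is an injective $A$-module; hence $Res(\mathbf{I})$ is totally acyclic and $_{R}M$ is Gorenstein injective. For (2)$\Rightarrow$(3) I would apply the exact, injective-preserving functor $H$ to a totally acyclic complex $\mathbf{I}$ of injective $R$-modules; total acyclicity of $\mathrm{Hom}_{R}(A,\mathbf{I})$ follows from $\mathrm{Hom}_{A}(J,\mathrm{Hom}_{R}(A,\mathbf{I}))\cong\mathrm{Hom}_{R}(Res(J),\mathbf{I})$ via the $(Res,H)$ adjunction together with $Res(J)$ injective, so $\mathrm{Hom}_{R}(A,M)$, and hence $A\otimes_{R}M\cong\mathrm{Hom}_{R}(A,M)$, is Gorenstein injective over $A$. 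For (3)$\Rightarrow$(2) I would use that the counit $A\otimes_{R}M\to M$ is split as $R$-modules (by $m\mapsto 1\otimes m$), so $_{R}M$ is a direct summand of $_{R}(A\otimes_{R}M)$; since the latter is Gorenstein injective over $A$, (1)$\Rightarrow$(2) makes it Gorenstein injective over $R$, and summands of Gorenstein injectives are Gorenstein injective.

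The substantive part is (3)$\Rightarrow$(1), the dual of the explicit construction in Theorem \ref{thm 2.1}. I would first prove $\mathrm{Ext}^{i}_{A}(J,M)=0$ for every injective $A$-module $J$ and $i\geq1$: since $_{R}J$ is injective and $_{R}M$ is Gorenstein injective one has $\mathrm{Ext}^{i}_{R}(J,M)=0$, and the $(T,Res)$ adjunction gives $\mathrm{Ext}^{i}_{A}(A\otimes_{R}J,M)\cong\mathrm{Ext}^{i}_{R}(J,M)=0$; because $J$ is injective, the unit $J\to\mathrm{Hom}_{R}(A,J)\cong A\otimes_{R}J$ is a split $A$-monomorphism, so $J$ is an $A$-summand of $A\otimes_{R}J$ and $\mathrm{Ext}^{i}_{A}(J,M)=0$. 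This makes the injective coresolution (the ``right part'') $\mathrm{Hom}_{A}(J,-)$-exact, and it remains to build the ``left part'' $\cdots\to I_{1}\to I_{0}\to M\to0$ by injective $A$-modules, $\mathrm{Hom}_{A}(J,-)$-exact. Using that $A\otimes_{R}M$ is Gorenstein injective over $A$, I would take an exact sequence $0\to K\to I_{0}\to A\otimes_{R}M\to0$ with $I_{0}$ injective and $K$ Gorenstein injective, and compose with the counit to get $0\to K_{0}\to I_{0}\to M\to0$. The $R$-splitting of the counit together with $\mathrm{Ext}^{1}_{R}(I,K)=0$ for injective $R$-modules $I$ (valid as $K$ is Gorenstein injective over $R$) shows this sequence is $\mathrm{Hom}_{R}(I,-)$-exact, whence $K_{0}$ is Gorenstein injective over $R$ by the dual of \cite[Corollary 2.11]{Hol04}; the $A$-splitting of $J\to A\otimes_{R}J$ together with $\mathrm{Ext}^{1}_{A}(A\otimes_{R}J,K_{0})\cong\mathrm{Ext}^{1}_{R}(J,K_{0})=0$ shows it is moreover $\mathrm{Hom}_{A}(J,-)$-exact. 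Since $_{R}K_{0}$ is Gorenstein injective, condition (3) holds for $K_{0}$ by (2)$\Rightarrow$(3), so the construction iterates and produces the required left part.

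The main obstacle I anticipate is precisely this last verification of $\mathrm{Hom}_{A}(J,-)$-exactness: unlike the Ext-vanishing, which passes cleanly through the adjunction, one must lift an arbitrary $A$-map $J\to M$ through $I_{0}\to M$, and the only available splitting of the unit $J\to A\otimes_{R}J$ is $A$-linear (coming from injectivity of $J$), while the splitting of the counit $A\otimes_{R}M\to M$ is merely $R$-linear. Reconciling these — transporting $J\to M$ to $A\otimes_{R}J$, lifting there by $\mathrm{Ext}^{1}_{A}(A\otimes_{R}J,K_{0})=0$, and pulling back along the $A$-split unit — is the delicate point, mirroring exactly the interplay between $\varphi$ and $\psi$ in the proof of Theorem \ref{thm 2.1}.
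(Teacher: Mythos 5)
Your proposal is correct and is essentially the paper's own proof: the paper establishes Theorem \ref{thm 2.2} simply by dualizing Theorem \ref{thm 2.1} (it says only ``Dually, we have the following''), and your argument---restriction preserving injectives since it is right adjoint to the exact functor $A\otimes_{R}-$, the adjunction isomorphisms for $\mathrm{Ext}$, the counit-based construction of the left part with $K_{0}$ shown Gorenstein injective over $R$ via the dual of \cite[Corollary 2.11]{Hol04}, and the iteration through $(2)\Rightarrow(3)$---is precisely that dualization, carried out correctly. The ``delicate point'' you flag at the end is in fact already resolved by the very recipe you describe: with $u'u=\mathrm{id}_{J}$ one gets $(\mu\pi)(\beta u)=\alpha u'u=\alpha$, exactly mirroring the interplay of $\varphi$, $\varphi'$, $\psi$, $\psi'$ in the proof of Theorem \ref{thm 2.1}.
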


\section{\bf Gorenstein projective and injective dimensions over Frobenius extensions}

Unless otherwise mentioned we will be working with left modules. The Gorenstein projective and injective dimensions of modules are defined in the standard way by using resolutions of Gorenstein modules. That is, the Gorenstein projective dimension of a $\Lambda$-module $M$, denoted by $\text{G-proj.dim}_{\Lambda}M$, is defined by declaring that $\text{G-proj.dim}_{\Lambda}M \leq n$ ($n\in \mathbb{N}$) if, and only if, $M$ has a Gorenstein projective resolution $0\rightarrow G_{n}\rightarrow\cdots\rightarrow G_{1}\rightarrow G_{0}\rightarrow M\rightarrow 0$ of length $n$. We set $\text{G-proj.dim}_{\Lambda}M = \infty$ if there is no such a resolution. Similarly, the Gorenstein injective dimension is defined; see for example \cite{EJ00, Hol04}.

It follows from \cite[Theorem 8, Theorem 8$^{'}$]{NT60} that: for a Frobenius extension $R\subset A$ and any left $A$-module $M$, if the $A$-projective dimension ($A$-injective dimension, respectively) of $M$ is finite, then one has $\text{proj.dim}_{A} M = \text{proj.dim}_{R}M$ ($\text{inj.dim}_{A} M = \text{inj.dim}_{R}M$, respectively). We can extend this result to corresponding Gorenstein homological dimensions.
This serves as an example to support the metatheorem (see Holm's thesis \cite{Holm}) ``every result in classical homological algebra has a counterpart in Gorenstein homological algebra''.

\begin{proposition}\label{prop 3.1} Let $R\subset A$ be a Frobenius extension of rings. For any left $A$-module $M$, if $\mathrm{G}\text{-}\mathrm{proj.dim}_{A}M<\infty$, then
$\mathrm{G}\text{-}\mathrm{proj.dim}_{A}M = \mathrm{G}\text{-}\mathrm{proj.dim}_{R}M$. Dually, if $\mathrm{G}\text{-}\mathrm{inj.dim}_{A}M<\infty$, then $\mathrm{G}\text{-}\mathrm{inj.dim}_{A}M = \mathrm{G}\text{-}\mathrm{inj.dim}_{R}M$.
\end{proposition}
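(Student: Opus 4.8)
The plan is to prove the two equalities by a standard dimension-shifting argument, using Theorem \ref{thm 2.1} (resp. Theorem \ref{thm 2.2}) as the crucial input that makes Gorenstein projectivity (resp. injectivity) invariant under restriction. I will only write the argument for the Gorenstein projective dimension, since the injective case is entirely dual.

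First I would establish the easy inequality $\mathrm{G}\text{-}\mathrm{proj.dim}_{R}M \leq \mathrm{G}\text{-}\mathrm{proj.dim}_{A}M$. Suppose the latter is some finite $n$, so that $M$ admits a Gorenstein projective $A$-resolution $0\rightarrow G_{n}\rightarrow\cdots\rightarrow G_{0}\rightarrow M\rightarrow 0$. By Theorem \ref{thm 2.1}, each $G_{i}$ is Gorenstein projective as an $R$-module, so restricting the resolution exhibits $M$ as having an $R$-Gorenstein projective resolution of length $n$, whence $\mathrm{G}\text{-}\mathrm{proj.dim}_{R}M \leq n$.

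For the reverse inequality I would use the characterization of Gorenstein projective dimension by vanishing of Ext against modules of finite projective dimension, together with a transfer of Ext-vanishing between $A$ and $R$. Set $m=\mathrm{G}\text{-}\mathrm{proj.dim}_{R}M$ (finite by the previous paragraph) and $n=\mathrm{G}\text{-}\mathrm{proj.dim}_{A}M<\infty$. By \cite[Theorem 2.20]{Hol04}, $n$ equals the supremum of those $i$ with $\mathrm{Ext}_{A}^{i}(M,Q)\neq 0$ for some projective $A$-module $Q$ (using that $\mathrm{G}\text{-}\mathrm{proj.dim}_{A}M$ is already known finite). Fix such an $i=n$ and a projective $A$-module $Q$ with $\mathrm{Ext}_{A}^{n}(M,Q)\neq 0$. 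The key step is the chain of isomorphisms, exactly as used in the proof of Theorem \ref{thm 2.1}:
$$\mathrm{Ext}_{A}^{i}(M,Q)\cong \mathrm{Ext}_{A}^{i}(M,\mathrm{Hom}_{R}(A,Q))\cong \mathrm{Ext}_{R}^{i}(M,Q),$$
where the first isomorphism comes from $Q$ being a direct summand of $A\otimes_{R}Q\cong\mathrm{Hom}_{R}(A,Q)$ as $A$-modules (the split epimorphism $\psi$ from the proof of Theorem \ref{thm 2.1}), and the second is the adjunction $(T,Res)$ applied to the restriction $_{R}Q$, which is projective since $_{R}A$ is finitely generated projective. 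Thus $\mathrm{Ext}_{R}^{n}(M,Q)\neq 0$ with $_{R}Q$ projective of finite (zero) projective dimension, forcing $m=\mathrm{G}\text{-}\mathrm{proj.dim}_{R}M\geq n$ by the same characterization over $R$. Combining gives $m=n$.

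The main obstacle is ensuring the Ext-comparison is set up so that the supremum characterization applies cleanly on both sides; this requires knowing a priori that both Gorenstein projective dimensions are finite, which is why finiteness over $A$ is hypothesized and finiteness over $R$ is derived first. A secondary technical point is that the isomorphism $\mathrm{Hom}_{R}(A,Q)\cong A\otimes_{R}Q$ of $A$-modules, which underlies the Ext-transfer, is precisely the Frobenius condition $(1)$ of Definition \ref{def 1}, so the argument genuinely uses the Frobenius hypothesis rather than merely a general ring extension. The dual statement for Gorenstein injective dimension follows by replacing $T$, $Res$, $H$ and projectives by their injective counterparts and invoking Theorem \ref{thm 2.2} in place of Theorem \ref{thm 2.1}.
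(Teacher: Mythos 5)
Your proof is correct and takes essentially the same route as the paper's: both establish the easy inequality by restricting a Gorenstein projective resolution via Theorem \ref{thm 2.1}, and both obtain the reverse from \cite[Theorem 2.20]{Hol04} together with $\mathrm{Ext}_{A}^{i}(M, A\otimes_{R}Q)\cong \mathrm{Ext}_{R}^{i}(M,Q)$ and the fact that $Q$ is an $A$-direct summand of $A\otimes_{R}Q$ --- the paper phrases this as vanishing of $\mathrm{Ext}_{R}$ above degree $\mathrm{G}\text{-}\mathrm{proj.dim}_{R}M$ forcing $\mathrm{G}\text{-}\mathrm{proj.dim}_{A}M\leq \mathrm{G}\text{-}\mathrm{proj.dim}_{R}M$, while you use the contrapositive non-vanishing at degree $n$. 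Two harmless slips worth fixing: the first map in your displayed chain is only a split monomorphism (the direct-summand relation you cite, which is exactly what the non-vanishing transfer needs), not an isomorphism; and the second isomorphism rests on the adjunction $(Res, H)$ (with $Res$ exact and preserving projectives since $_{R}A$ is finitely generated projective), not on $(T, Res)$.
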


\begin{proof}
By Theorem \ref{thm 2.1}, any Gorenstein projective $A$-module is also Gorenstein projective in $R$-Mod. It is easy to see that $\mathrm{G}\text{-}\mathrm{proj.dim}_{R}M \leq \mathrm{G}\text{-}\mathrm{proj.dim}_{A}M$. For the converse, we can assume that $\mathrm{G}\text{-}\mathrm{proj.dim}_{R}M = n < \infty$. Let $P$ be any projective $A$-module. Then, $P$ is also projective as an $R$-module. By \cite[Theorem 2.20]{Hol04}, for any $i>0$ we have $\mathrm{Ext}_{R}^{n+i}(M, P)=0$. Moreover, since $\mathrm{Ext}_{A}^{n+i}(M, A\otimes_{R}P)\cong \mathrm{Ext}_{R}^{n+i}(M, P)$ and $P$ is a direct summand of $A\otimes_{R}P$ as $A$-modules, we have  $\mathrm{Ext}_{A}^{n+i}(M, P)=0$. This implies that $\mathrm{G}\text{-}\mathrm{proj.dim}_{A}M \leq n$. Then, the equality
$\mathrm{G}\text{-}\mathrm{proj.dim}_{A}M = \mathrm{G}\text{-}\mathrm{proj.dim}_{R}M$ holds. Analogously, we can prove the assertion for Gorenstein injective dimension.
\end{proof}

\begin{proposition}\label{prop 3.2} Let $R\subset A$ be a Frobenius extension of rings, and $M$ be any left $A$-module. Then $\mathrm{G}\text{-}\mathrm{proj.dim}_{R}M = \mathrm{G}\text{-}\mathrm{proj.dim}_{A}(A\otimes_{R}M) = \mathrm{G}\text{-}\mathrm{proj.dim}_{R}(A\otimes_{R}M)$. Similarly, we have $\mathrm{G}\text{-}\mathrm{inj.dim}_{R}M = \mathrm{G}\text{-}\mathrm{inj.dim}_{A}(A\otimes_{R}M) = \mathrm{G}\text{-}\mathrm{inj.dim}_{R}(A\otimes_{R}M)$.
\end{proposition}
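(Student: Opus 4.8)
The plan is to deduce all the equalities from a single cycle of inequalities. Writing $a=\mathrm{G}\text{-}\mathrm{proj.dim}_{R}M$, $b=\mathrm{G}\text{-}\mathrm{proj.dim}_{R}(A\otimes_{R}M)$ and $c=\mathrm{G}\text{-}\mathrm{proj.dim}_{A}(A\otimes_{R}M)$, I would show $a\le b\le c\le a$, which forces $a=b=c$. Each inequality holds in the extended sense (any dimension being $\le\infty$), so no finiteness hypothesis is needed and the case where all three values are infinite is covered automatically; this is what lets the statement dispense with the finiteness assumption of Proposition~\ref{prop 3.1}.

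Two structural facts underlie the argument. First, the multiplication map $\mu\colon A\otimes_{R}M\to M$, $a\otimes m\mapsto am$, is an $A$-epimorphism that is split over $R$ by $m\mapsto 1\otimes m$, so $_{R}M$ is a direct summand of $_{R}(A\otimes_{R}M)$. Second, as $A_{R}$ is finitely generated projective, the functor $A\otimes_{R}-$ is exact, sends projective $R$-modules to projective $A$-modules, and—by the computation carried out in the proof of Theorem~\ref{thm 2.1}, where tensoring a totally acyclic complex of projectives by $A$ is shown to remain totally acyclic—sends every Gorenstein projective $R$-module to a Gorenstein projective $A$-module.

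Granting these, the inequalities are short. For $a\le b$, the first fact makes $_{R}M$ a direct summand of $_{R}(A\otimes_{R}M)$, and the Gorenstein projective dimension of a direct summand does not exceed that of the ambient module. For $b\le c$, Theorem~\ref{thm 2.1} shows that the restriction of a Gorenstein projective $A$-module is Gorenstein projective over $R$, so a Gorenstein projective $A$-resolution of $A\otimes_{R}M$ restricts to a Gorenstein projective $R$-resolution of the same length. For $c\le a$ (which is nontrivial only when $a=n<\infty$), I would apply the exact functor $A\otimes_{R}-$ to a Gorenstein projective resolution $0\to G_{n}\to\cdots\to G_{0}\to{}_{R}M\to 0$; by the second fact this yields a Gorenstein projective $A$-resolution of $A\otimes_{R}M$ of length $n$. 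The cycle then closes and $a=b=c$.

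For the Gorenstein injective statement I would dualize, replacing the induction functor by the coinduction functor $\mathrm{Hom}_{R}(A,-)$. The decisive use of the Frobenius hypothesis is the natural isomorphism $A\otimes_{R}M\cong\mathrm{Hom}_{R}(A,M)$ of $A$-modules from Definition~\ref{def 1}(1), which lets me compute the Gorenstein injective dimension of $A\otimes_{R}M$ through $\mathrm{Hom}_{R}(A,M)$ instead. The split $A$-monomorphism $\varphi\colon M\to\mathrm{Hom}_{R}(A,M)$, $\varphi(m)(a)=am$, from the proof of Theorem~\ref{thm 2.1} plays the role of the first structural fact, exhibiting $_{R}M$ as an $R$-direct summand of $\mathrm{Hom}_{R}(A,M)$; and $\mathrm{Hom}_{R}(A,-)$, being exact (since $_{R}A$ is projective) and carrying Gorenstein injective $R$-modules to Gorenstein injective $A$-modules by Theorem~\ref{thm 2.2}, plays the role of the second. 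Applied to a Gorenstein injective coresolution of $_{R}M$ it gives the analogue of $c\le a$, while the other two inequalities transfer verbatim. The only genuine subtlety is precisely this switch from induction to coinduction: it is the Frobenius condition $A\otimes_{R}-\cong\mathrm{Hom}_{R}(A,-)$ that identifies the two functors and makes the injective cycle close in exactly the same way as the projective one.
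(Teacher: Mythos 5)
Your proof is correct and follows essentially the same route as the paper: the same cycle of inequalities $\mathrm{G}\text{-}\mathrm{proj.dim}_{R}M\leq \mathrm{G}\text{-}\mathrm{proj.dim}_{R}(A\otimes_{R}M)\leq \mathrm{G}\text{-}\mathrm{proj.dim}_{A}(A\otimes_{R}M)\leq \mathrm{G}\text{-}\mathrm{proj.dim}_{R}M$, established via the $R$-split multiplication map together with \cite[Proposition 2.19]{Hol04}, restriction of Gorenstein projectives via Theorem \ref{thm 2.1}, and exactness of $A\otimes_{R}-$ applied to a Gorenstein projective resolution. Your explicit dualization through the Frobenius isomorphism $A\otimes_{R}-\cong\mathrm{Hom}_{R}(A,-)$ is exactly what the paper's terse ``Similarly'' suppresses, and it is a worthwhile clarification rather than a deviation.
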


\begin{proof}
It follows from Theorem \ref{thm 2.1} that $\mathrm{G}\text{-}\mathrm{proj.dim}_{R}(A\otimes_{R}M) \leq \mathrm{G}\text{-}\mathrm{proj.dim}_{A}(A\otimes_{R}M)$. For any Gorenstein projective $R$-module $G$, it follows from Theorem \ref{thm 2.1} that $A\otimes_{R}G$ is a Gorenstein projective $A$-module. Then $\mathrm{G}\text{-}\mathrm{proj.dim}_{A}(A\otimes_{R}M) \leq \mathrm{G}\text{-}\mathrm{proj.dim}_{R}M$ is easy to see. As $R$-modules, $M$ is a direct summand of $A\otimes_{R} M$. It follows immediately from \cite[Proposition 2.19]{Hol04} that $\mathrm{G}\text{-}\mathrm{proj.dim}_{R}M\leq \mathrm{G}\text{-}\mathrm{proj.dim}_{R}(A\otimes_{R}M)$. Hence, we get the desired equality.
\end{proof}

By \cite[Definition VII2.1, VII2.5]{BR07}, a ring $\Lambda$ is left-Gorenstein provided the category $\mathrm{Mod}(\Lambda)$ of left $\Lambda$-modules is a Gorenstein category, that is, if both $\mathrm{spli}(\Lambda)$ and $\mathrm{silp}(\Lambda)$ are finite. Here, $\mathrm{spli}(\Lambda)$ is the supremum of the projective lengths of injective left $\Lambda$-modules, and $\mathrm{silp}(\Lambda)$ is the supremum of the injective lengths of projective left $\Lambda$-modules. These two invariants are introduced by Gedrich and Gruenberg \cite{GG87}, in connection with the existence of complete cohomological functors in the category of left $\Lambda$-modules.

According to a classical result established by Auslander, Buchsbaum and Serre, a commutative noetherian local ring is regular if and only if the projective dimension of its residue field is finite; moreover, in this case the ring has finite global dimension. It is known that $\Lambda$ is left-Gorenstein if and only if the left Gorenstein global dimension of $\Lambda$ is finite, see for example \cite{Emm12}. We prefer to call left-Gorenstein ring as (left) Gorenstein regular ring, meant a Gorenstein counterpart of regular ring. By \cite[Theorem 10.2.14]{EJ00}, each Iwanaga-Gorenstein ring (i.e. two-sided noetherian ring with finite left and right self-injective dimension) is Gorenstein regular.

Recall that an extension $R\subset A$ of rings is split if $A= R\oplus S$ as $R$-bimodules for some subbimodule $S$ in $A$; see for example \cite{Kad}. In this case, it is clear to see that for any $R$-module $M$, $M= R\otimes_{R}M$ is a direct summand of $A\otimes_{R}M$.

\begin{theorem}\label{thm 3.1}
Let $R\subset A$ be a split Frobenius extension of rings. Then $A$ is Gorenstein regular if and only if $R$ is Gorenstein regular.
\end{theorem}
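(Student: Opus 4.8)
The plan is to pass to Gorenstein global dimensions via the Bennis--Mahdou equality \cite[Theorem 1.1]{BM10}, $\mathrm{G}\text{-}\mathrm{gldim}(\Lambda)=\sup\{\mathrm{G}\text{-}\mathrm{proj.dim}_{\Lambda}M\mid M\in\mathrm{Mod}(\Lambda)\}$, together with the recorded fact that $\Lambda$ is Gorenstein regular exactly when $\mathrm{G}\text{-}\mathrm{gldim}(\Lambda)<\infty$. Thus it suffices to show that $\mathrm{G}\text{-}\mathrm{gldim}(A)<\infty$ if and only if $\mathrm{G}\text{-}\mathrm{gldim}(R)<\infty$. In fact the argument will produce the two inequalities $\mathrm{G}\text{-}\mathrm{gldim}(A)\le\mathrm{G}\text{-}\mathrm{gldim}(R)$ and $\mathrm{G}\text{-}\mathrm{gldim}(R)\le\mathrm{G}\text{-}\mathrm{gldim}(A)$, thereby anticipating the equality of Theorem \ref{thm 3.2}.

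For the implication ``$R$ Gorenstein regular $\Rightarrow A$ Gorenstein regular'', set $d=\mathrm{G}\text{-}\mathrm{gldim}(R)<\infty$ and let $M$ be any left $A$-module. Since $_{R}A$ is finitely generated projective, the restriction functor is exact and carries projective $A$-modules to projective $R$-modules. I would take a projective resolution $\cdots\to P_{1}\to P_{0}\to M\to 0$ in $\mathrm{Mod}(A)$, put $K=\mathrm{Ker}(P_{d-1}\to P_{d-2})$, and restrict the truncated exact sequence $0\to K\to P_{d-1}\to\cdots\to P_{0}\to M\to 0$ to $R$. As $\mathrm{G}\text{-}\mathrm{proj.dim}_{R}({}_{R}M)\le d$, the $d$-th syzygy $_{R}K$ occurring in this projective $R$-resolution is Gorenstein projective over $R$ (see \cite[Theorem 2.20]{Hol04}). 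Theorem \ref{thm 2.1}, in the direction $(2)\Rightarrow(1)$, then promotes this to: $K$ is Gorenstein projective over $A$. Consequently $0\to K\to P_{d-1}\to\cdots\to P_{0}\to M\to 0$ is a Gorenstein projective resolution over $A$ of length $d$, so $\mathrm{G}\text{-}\mathrm{proj.dim}_{A}M\le d$. Taking the supremum over $M$ yields $\mathrm{G}\text{-}\mathrm{gldim}(A)\le d<\infty$.

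For the converse ``$A$ Gorenstein regular $\Rightarrow R$ Gorenstein regular'', where splitness is used, set $d=\mathrm{G}\text{-}\mathrm{gldim}(A)<\infty$ and let $N$ be any left $R$-module. The induced $A$-module $A\otimes_{R}N$ satisfies $\mathrm{G}\text{-}\mathrm{proj.dim}_{A}(A\otimes_{R}N)\le d$. By the inequality $\mathrm{G}\text{-}\mathrm{proj.dim}_{R}(-)\le\mathrm{G}\text{-}\mathrm{proj.dim}_{A}(-)$ established in the proof of Proposition \ref{prop 3.1} (a direct consequence of Theorem \ref{thm 2.1}), the restriction $_{R}(A\otimes_{R}N)$ has $\mathrm{G}\text{-}\mathrm{proj.dim}_{R}\le d$. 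Since the extension is split, $N$ is a direct summand of $A\otimes_{R}N$ as left $R$-modules, as noted just before the statement, and Gorenstein projective dimension passes to direct summands by \cite[Proposition 2.19]{Hol04}; hence $\mathrm{G}\text{-}\mathrm{proj.dim}_{R}N\le d$. Therefore $\mathrm{G}\text{-}\mathrm{gldim}(R)\le d<\infty$, completing the equivalence.

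The step I expect to be the main obstacle, and the reason the two implications are genuinely asymmetric, is the passage back to $M$ itself in the direction ``$R$ regular $\Rightarrow A$ regular''. One cannot simply invoke Proposition \ref{prop 3.2}, which controls $\mathrm{G}\text{-}\mathrm{proj.dim}_{A}(A\otimes_{R}M)$ but not $\mathrm{G}\text{-}\mathrm{proj.dim}_{A}M$: in general $M$ is \emph{not} an $A$-direct summand of $A\otimes_{R}M$ (for example, the simple module $M=A/(x)$ over $A=k[x]/(x^{2})$ with $R=k$ is not an $A$-direct summand of $A\otimes_{R}M\cong A$, since $A$ is indecomposable). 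This failure is precisely what forces the syzygy argument, whose crux is the nontrivial transfer $(2)\Rightarrow(1)$ of Theorem \ref{thm 2.1}: restricting a sufficiently high syzygy to $R$ makes it Gorenstein projective there, and Theorem \ref{thm 2.1} returns this to Gorenstein projectivity over $A$. By contrast, the split hypothesis is only needed for the reverse implication, where it supplies the $R$-module retraction of $A\otimes_{R}N$ onto $N$.
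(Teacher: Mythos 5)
Your proof is correct, but it takes a genuinely different route from the paper's. The paper proves Theorem \ref{thm 3.1} without ever invoking Gorenstein modules: it works with the definition of Gorenstein regularity recorded from \cite{BR07} (finiteness of $\mathrm{spli}$ and $\mathrm{silp}$) and shows by classical dimension shifting that ``every projective module has finite injective dimension'' and ``every injective module has finite projective dimension'' each transfer across the extension, using the Frobenius isomorphism $A\otimes_{R}I\cong \mathrm{Hom}_{R}(A,I)$ to induce injective resolutions, the classical equality of \cite[Theorem 8$'$]{NT60} for the ``if'' direction, and the same two direct-summand tricks you use ($_{A}P$ is an $A$-summand of $A\otimes_{R}P$ for projective $P$; the ring-split hypothesis only for descending from $A\otimes_{R}Q$ to $Q$). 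You instead pass through Gorenstein global dimension via \cite{Emm12} and \cite[Theorem 1.1]{BM10} and bound $\mathrm{G}\text{-}\mathrm{proj.dim}$ directly. Your second inequality essentially reproduces the paper's proof of Theorem \ref{thm 3.2}; the genuinely new ingredient is your first inequality, the syzygy argument combining \cite[Theorem 2.20]{Hol04} with the transfer $(2)\Rightarrow(1)$ of Theorem \ref{thm 2.1}. This circumvents the a priori hypothesis $\mathrm{G}\text{-}\mathrm{proj.dim}_{A}M<\infty$ of Proposition \ref{prop 3.1}, which the paper cannot drop there because its converse step concludes $\mathrm{G}\text{-}\mathrm{proj.dim}_{A}M\leq n$ from Ext-vanishing, a criterion \cite[Theorem 2.20]{Hol04} validates only for modules already known to have finite Gorenstein projective dimension over $A$. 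What your route buys: it proves Theorem \ref{thm 3.2} simultaneously, it strengthens Proposition \ref{prop 3.1} to the statement that the two dimensions agree whenever \emph{either} is finite, and it isolates correctly that splitness is needed only for ``$A$ Gorenstein regular $\Rightarrow$ $R$ Gorenstein regular'' --- an asymmetry also present, though less visibly, in the paper's argument; your $A=k[x]/(x^{2})$ example justifying why $M$ need not be an $A$-summand of $A\otimes_{R}M$ is apt. What the paper's route buys is independence from the Gorenstein transfer theorem: its Theorem \ref{thm 3.1} rests only on classical homological algebra and \cite{NT60}. One cosmetic point: your formula $K=\mathrm{Ker}(P_{d-1}\rightarrow P_{d-2})$ presupposes $d\geq 2$; for $d=1$ the displayed truncated sequence is still the right object, and for $d=0$ one applies Theorem \ref{thm 2.1} to $M$ itself.
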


\begin{proof}
Let $R\subset A$ be a Frobenius extension. We claim that every projective $R$-module has finite $R$-injective dimension if, and only if every projective $A$-module has finite $A$-injective dimension.

For the ``only if'' part, let $P$ be a projective left $A$-module. Consider $P$ as an $R$-module, then by the assumption $\text{inj.dim}_{R}P$ is finite. Assume that $\text{inj.dim}_{R}P =n$ and $0\rightarrow P\rightarrow I^0\rightarrow I^1\rightarrow \cdots\rightarrow I^n\rightarrow 0$ is an $R$-injective resolution of $_{R}P$. For any injective $R$-module $I$, it follows from the isomorphism $A\otimes_{R}I\cong \mathrm{Hom}_{R}(A, I)$ that $A\otimes_{R}I$ is an injective left $A$-module. Hence, the exact sequence $0\rightarrow A\otimes_{R}P\rightarrow A\otimes_{R}I^0\rightarrow A\otimes_{R}I^1\rightarrow \cdots\rightarrow A\otimes_{R}I^n\rightarrow 0$ is an $A$-injective resolution of
$A\otimes_{R}P$. Moreover, $_{A}P$ is a direct summand of $A\otimes_{R}P$, so $P$ is of finite $A$-injective dimension.

Conversely, for the ``if'' part, let $Q$ be a projective left $R$-module. By the assumption, the projective left $A$-module $A\otimes_{R}Q$ has finite $A$-injective dimension. It follows from \cite[Theorem 8$^{'}$]{NT60} that $\text{inj.dim}_{R}(A\otimes_{R}Q) = \text{inj.dim}_{A}(A\otimes_{R}Q)<\infty$. Moreover, $Q$ is a direct summand of $A\otimes_{R}Q$, and then $Q$ has finite $R$-injective dimension.

Similarly, we can prove that every injective $R$-module has finite $R$-projective dimension if and only if every injective $A$-module has finite $A$-projective dimension. This will imply the desired assertion that $A$ is Gorenstein regular if and only if $R$ is Gorenstein regular.
\end{proof}

Moreover, we have the following. It shows that not only the finiteness of Gorenstein global dimension, but also  Gorenstein global dimension itself, is invariant under Frobenius extensions.

\begin{theorem}\label{thm 3.2}
Let $R\subset A$ be a split Frobenius extension of rings. Then $\mathrm{G}\text{-}\mathrm{gldim}(A)= \mathrm{G}\text{-}\mathrm{gldim}(R)$.
\end{theorem}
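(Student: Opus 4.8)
The plan is to prove the two inequalities $\mathrm{G}\text{-}\mathrm{gldim}(A)\leq \mathrm{G}\text{-}\mathrm{gldim}(R)$ and $\mathrm{G}\text{-}\mathrm{gldim}(R)\leq \mathrm{G}\text{-}\mathrm{gldim}(A)$ separately, using the characterization of the Gorenstein global dimension as the common value of the supremum of Gorenstein projective dimensions over all modules. By Theorem \ref{thm 3.1}, the finiteness of one side is equivalent to the finiteness of the other, so the interesting content is the numerical equality when both are finite; if both are infinite the equality is trivial. I would therefore assume throughout that the common finite value exists and work with $\mathrm{G}\text{-}\mathrm{gldim}(\Lambda)=\mathrm{sup}\{\mathrm{G}\text{-}\mathrm{proj.dim}_{\Lambda}M\mid M\in\mathrm{Mod}(\Lambda)\}$.

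For the inequality $\mathrm{G}\text{-}\mathrm{gldim}(R)\leq \mathrm{G}\text{-}\mathrm{gldim}(A)$, I would start from an arbitrary left $R$-module $N$ and apply the functor $T=A\otimes_{R}-$ to pass to $\mathrm{Mod}(A)$. Writing $g=\mathrm{G}\text{-}\mathrm{gldim}(A)$, we have $\mathrm{G}\text{-}\mathrm{proj.dim}_{A}(A\otimes_{R}N)\leq g$. Now Proposition \ref{prop 3.2} gives $\mathrm{G}\text{-}\mathrm{proj.dim}_{R}N=\mathrm{G}\text{-}\mathrm{proj.dim}_{A}(A\otimes_{R}N)$, so $\mathrm{G}\text{-}\mathrm{proj.dim}_{R}N\leq g$. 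Since $N$ was arbitrary, taking the supremum yields $\mathrm{G}\text{-}\mathrm{gldim}(R)\leq g$. The splitness hypothesis is what makes Proposition \ref{prop 3.2} usable here: it guarantees $N$ is an $R$-module direct summand of $A\otimes_{R}N$, which is exactly what forces the first equality in that proposition.

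For the reverse inequality $\mathrm{G}\text{-}\mathrm{gldim}(A)\leq \mathrm{G}\text{-}\mathrm{gldim}(R)$, I would take an arbitrary left $A$-module $M$ and estimate $\mathrm{G}\text{-}\mathrm{proj.dim}_{A}M$ by comparing it to $\mathrm{G}\text{-}\mathrm{proj.dim}_{R}M$. Writing $r=\mathrm{G}\text{-}\mathrm{gldim}(R)$, the underlying module $_{R}M$ satisfies $\mathrm{G}\text{-}\mathrm{proj.dim}_{R}M\leq r<\infty$, so in particular $\mathrm{G}\text{-}\mathrm{proj.dim}_{A}M<\infty$ once we know finiteness transfers (here the Gorenstein regularity of $A$ from Theorem \ref{thm 3.1} guarantees that $\mathrm{G}\text{-}\mathrm{proj.dim}_{A}M$ is finite for every $M$). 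Then Proposition \ref{prop 3.1} applies, giving $\mathrm{G}\text{-}\mathrm{proj.dim}_{A}M=\mathrm{G}\text{-}\mathrm{proj.dim}_{R}M\leq r$, and taking the supremum over all $M$ gives $\mathrm{G}\text{-}\mathrm{gldim}(A)\leq r$. Finally, the Gorenstein injective side of the equality $\mathrm{G}\text{-}\mathrm{gldim}(\Lambda)=\mathrm{sup}\{\mathrm{G}\text{-}\mathrm{inj.dim}_{\Lambda}M\}$ could be handled dually using the injective parts of Theorem \ref{thm 2.2}, Proposition \ref{prop 3.1}, and Proposition \ref{prop 3.2}, though invoking the Bennis--Mahdou equality \cite[Theorem 1.1]{BM10} lets us work entirely on the projective side.

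The main obstacle I anticipate is guaranteeing the finiteness needed to invoke Proposition \ref{prop 3.1} in the reverse inequality: that proposition requires $\mathrm{G}\text{-}\mathrm{proj.dim}_{A}M<\infty$ as a hypothesis rather than deriving it, so one must first establish, via Theorem \ref{thm 3.1}, that $R$ Gorenstein regular forces $A$ Gorenstein regular and hence every $A$-module has finite Gorenstein projective dimension. Care is also needed at the level of logical structure: the cleanest write-up first dispenses with the case where $\mathrm{G}\text{-}\mathrm{gldim}(R)=\infty$ (where by Theorem \ref{thm 3.1} also $\mathrm{G}\text{-}\mathrm{gldim}(A)=\infty$ and equality is automatic), and only then runs the two numerical inequalities under the standing assumption that both dimensions are finite.
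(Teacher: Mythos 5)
Your proposal is correct and takes essentially the same route as the paper's own proof: dispose of the infinite case via Theorem \ref{thm 3.1}, obtain $\mathrm{G}\text{-}\mathrm{gldim}(A)\leq \mathrm{G}\text{-}\mathrm{gldim}(R)$ from Proposition \ref{prop 3.1} (with finiteness supplied by Theorem \ref{thm 3.1}), and get the reverse inequality by applying $A\otimes_{R}-$ to an arbitrary $R$-module $N$ together with the split-summand comparison, where your citation of Proposition \ref{prop 3.2} (whose proof extends to $R$-modules under splitness, as you observe) plays the role of the paper's appeal to Proposition \ref{prop 3.1}. Your explicit verification that the finiteness hypothesis of Proposition \ref{prop 3.1} must first be secured via Theorem \ref{thm 3.1} is a point the paper leaves implicit, and is a welcome clarification rather than a deviation.
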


\begin{proof}
We deduce from Theorem \ref{thm 3.1} that $\mathrm{G}\text{-}\mathrm{gldim}(A)=\infty$ if and only if $\mathrm{G}\text{-}\mathrm{gldim}(R)=\infty$. Now we assume that both $\mathrm{G}\text{-}\mathrm{gldim}(A)$ and $\mathrm{G}\text{-}\mathrm{gldim}(R)$ are finite.

By Proposition \ref{prop 3.1}, there is an equality $\mathrm{G}\text{-}\mathrm{proj.dim}_{A}M = \mathrm{G}\text{-}\mathrm{proj.dim}_{R}M$ for any $A$-module $M$. Hence, $\mathrm{G}\text{-}\mathrm{gldim}(A)\leq \mathrm{G}\text{-}\mathrm{gldim}(R)$. Let $N$ be any $R$-module. By Proposition \ref{prop 3.1}, $\mathrm{G}\text{-}\mathrm{proj.dim}_{R}N \leq \mathrm{G}\text{-}\mathrm{proj.dim}_{R}(A\otimes_{R}N) = \mathrm{G}\text{-}\mathrm{proj.dim}_{A}(A\otimes_{R}N)$. This implies that $\mathrm{G}\text{-}\mathrm{gldim}(R) \leq \mathrm{G}\text{-}\mathrm{gldim}(A)$. Then, the desired equality follows.
\end{proof}

For a ring $\Lambda$ of finite Gorenstein global dimension (i.e. Gorenstein regular ring, or left-Gorenstein ring), Emmanouil got the following equalities:
$\mathrm{G}\text{-}\mathrm{gldim}(\Lambda)= \mathrm{spli}(\Lambda)=\mathrm{silp}(\Lambda)=\mathrm{fin.dim}(\Lambda)$, by comparing Gorenstein projective and injective dimensions with some invariants of rings; see \cite[Theorem 4.1]{Emm12}. Here, the left finitistic dimension $\mathrm{fin.dim}(\Lambda)$ of $\Lambda$ is defined as the supremum of the projective dimensions of those left $\Lambda$-modules that have finite projective dimension. The following is immediate.

\begin{corollary}\label{cor 3.1} Let $R\subset A$ be a split Frobenius extension of rings. If either $A$ or $R$ is Gorenstein regular, then there are equalities: $$\mathrm{spli}(A)= \mathrm{silp}(A)=\mathrm{fin.dim}(A) = \mathrm{spli}(R)  = \mathrm{silp}(R) = \mathrm{fin.dim}(R).$$
\end{corollary}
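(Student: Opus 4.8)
The plan is to concatenate the two theorems of this section with Emmanouil's equalities \cite[Theorem 4.1]{Emm12}, so that essentially no new work is required beyond assembling results already in hand. First I would use the hypothesis together with Theorem \ref{thm 3.1}: since $R\subset A$ is a split Frobenius extension and one of $A$, $R$ is Gorenstein regular, the biconditional in that theorem forces \emph{both} $A$ and $R$ to be Gorenstein regular. Equivalently (as recorded in the discussion preceding Theorem \ref{thm 3.1}), both $\mathrm{G}\text{-}\mathrm{gldim}(A)$ and $\mathrm{G}\text{-}\mathrm{gldim}(R)$ are finite, which is precisely the finiteness hypothesis needed to apply Emmanouil's result.

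Next I would apply \cite[Theorem 4.1]{Emm12} to each ring separately. Because $A$ has finite Gorenstein global dimension, that result supplies the chain
$$\mathrm{G}\text{-}\mathrm{gldim}(A) = \mathrm{spli}(A) = \mathrm{silp}(A) = \mathrm{fin.dim}(A),$$
and because $R$ likewise has finite Gorenstein global dimension,
$$\mathrm{G}\text{-}\mathrm{gldim}(R) = \mathrm{spli}(R) = \mathrm{silp}(R) = \mathrm{fin.dim}(R).$$
At this stage the two chains are internally complete; what remains is to glue them at their leftmost terms.

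Finally, Theorem \ref{thm 3.2} provides exactly the bridging identity $\mathrm{G}\text{-}\mathrm{gldim}(A) = \mathrm{G}\text{-}\mathrm{gldim}(R)$, which fuses the two displays into the single string of six equalities asserted in the statement. I do not expect any genuine obstacle: the entire substance has been front-loaded into Theorems \ref{thm 3.1} and \ref{thm 3.2} and into the cited theorem, and the corollary is merely their composition. The only point deserving a moment's attention is the hypothesis bookkeeping—verifying that ``either $A$ or $R$ is Gorenstein regular'' really does license Theorem \ref{thm 3.1} in both directions—but this is precisely the content of that theorem's ``if and only if'', so it is automatic.
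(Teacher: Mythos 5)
Your proposal is correct and follows exactly the route the paper intends: the paper states the corollary is immediate from Theorem \ref{thm 3.1}, Theorem \ref{thm 3.2}, and \cite[Theorem 4.1]{Emm12}, and your write-up simply makes that assembly explicit (both rings Gorenstein regular via the biconditional, Emmanouil's chain for each ring, glued by $\mathrm{G}\text{-}\mathrm{gldim}(A)=\mathrm{G}\text{-}\mathrm{gldim}(R)$). No gaps.
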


\begin{remark} \indent$(1)$ For any finite group $G$, the integer group ring extension $\mathbb{Z}\subset \mathbb{Z}G$ is a split Frobenius extension. For any ring $R$, $R\subset R[x]/(x^2)$ is a split Frobenius extension, where $x$ is a variable which is supposed to commutate with all the elements of $R$.\\
\indent$(2)$ Every excellent extension (see e.g. \cite{Ren19} for the collection of definition and examples) is a split Frobenius extension.\\
\indent$(3)$ Recall that $R\subset A$ is a Frobenius extension of rings if and only if there exists an $R$-$R$-homomorphism $\tau: A\rightarrow R$ and elements $x_i$, $y_i$ in $A$, such that for any $a\in A$, one has $\sum\limits_{i}x_i\tau(y_ia) = a$ and $\sum\limits_{i}\tau(ax_i)y_i = a$; the triple $(\tau, x_{i}, y_{i})$ is called a Frobenius system, $\tau$ a Frobenius homomorphism. It follows from \cite[Corollary 4.2]{Kad} that if $\tau(1)=1$ then the Frobenius extension $R\subset A$ is split. Indeed, for the $R$-homomorphism $\varphi: A\otimes_{R}R\rightarrow R$ given by $\varphi(a\otimes r)= \tau(a)r$ for any $a\in A$ and $r\in R$, we have $\psi: R\rightarrow A\otimes_{R}R$, $r\rightarrow re$ with $e= \sum\limits_{i}x_{i}\otimes \tau(y_i)$, such that the composition $\varphi\psi$ is the identity map of $R$. Moreover, the relation between split Frobenius extensions
and separable Frobenius extensions is studied by Kadison \cite[Proposition 4.1]{Kad}.
\end{remark}

\begin{ack*}
The author is grateful to the referee for several comments that improved the paper, and he thanks Professor Xiao-Wu Chen for helpful suggestions.
\end{ack*}

\bigskip

\end{document}